\newtheorem{theorem}[equation]{Theorem}
\newtheorem{proposition}[equation]{Proposition}
\newcommand{\R}{\mathbf{R}}
\newcommand{\C}{\mathbf{C}}
\newcommand{\N}{\mathbf{N}}  
\newcommand{\Z}{\mathbf{Z}}
\newcommand{\Ge}{\mathfrak{g}}
\newcommand{\Ges}{\mathfrak{g}^{\star}}
\newcommand{\q}{\mathbf{q}}
\numberwithin{equation}{section}
\newcommand{\ignore}[1]{{}}
\newcommand{\Schw}{\mathcal{S}}
\renewcommand{\phi}{\varphi}
\newcommand{\op}{{\rm Op\,}}
\newcommand{\Ltwo}{{\mathbf{ L^2}}}
\begin{document}
 \baselineskip=17pt 
\thispagestyle{empty}
\small
{\noindent\footnotesize
   Pawe{\l} G{\l}owacki,\\
   Institute of Mathematics,  University of Wroc{\l}aw,\\
   pl. Grunwaldzki 2/4,    50-384 Wroc{\l}aw, Poland,\\
   {\tt glowacki@math.uni.wroc.pl}}

\vspace{8ex}
\begin{center}\bf
Composition and $\Ltwo$-boundedness
of flag kernels
\end{center}

\vspace{4ex}
{\small
\vspace{2ex}
\noindent
{\it AMS Subject Classification}: 22E30 (primary), 35S05 (secondary)\\
\noindent
{\it Key words and phrases:} : Singular integrals, flag kernels, symbolic calculus, homogeneous groups, Fourier transform.
}

\vspace{4ex}
{\small
\vspace{1ex}
{\bf Abstract.} We prove the composition and  $L^2$-boundedness theorems for the Nagel-Ricci-Stein flag kernels related to the natural gradation of  homogeneous groups. }

\vspace{4ex}
In \cite{nagel}, Nagel, Ricci, and Stein introduce a notion of a flag kernel which generalizes that of a  singular integral kernel of Calder\'on-Zygmund as a tool in their investigation of  operators naturally associated with the $\bar{\partial}_b$ on some $CR$ submanifolds of $\C^n\times\C^n$. \textit{A flag kernel} $K$ on a Euclidean vector space $V$ endowed with a family of dilations and a corresponding homogeneous norm $x\to|x|$ is a tempered distribution associated with  gradations
$$
V=\bigoplus_{j=1}^RV_j,
\qquad
V^{\star}=\bigoplus_{j=1}^RV_j^{\star}
$$
of the space and its dual. The Fourier transform of $K$ is required to be smooth for $\xi_R\neq0$ and satisfy
\begin{equation}\label{flag}
|D^{\alpha}\widehat{K}(\xi)|\le C_{\alpha}|\xi|_1^{-|\alpha_1|}|\xi|_2^{-|\alpha_2|}\dots|\xi|_R^{-|\alpha_R|},
\end{equation}
where
$$
|\xi|_j=\sum_{k=j}^R|\xi_k|,
\qquad
\xi=\sum_{k=1}^R\xi_k \in V^{\star},
$$
and $\alpha_j$ are submultiindices corresponding to the spaces $V_j^{\star}$. Actually, the authors define the flag kernels directly in terms of the smoothness and cancellation properties of the kernels, and then prove that the multiplier condition (\ref{flag}) is an equivalent possibility of definition.

They prove that if $V$ is the Lie algebra of the homogeneous group identified with the group itself, dilations are automorphisms of the group,  spaces $V_j$ are homogeneous, and $[V_j,V_k]=\{0\}$ for $j\neq k$, then the composition of flag kernels associated with the same gradation is still a flag kernel. Moreover, under the same hypotheses any flag kernel $K$ defines a bounded operator 
$$
Kf(x)=f\star\widetilde{K}(x)=\int_Vf(xy)K(y)\,dy
$$
on $L^p(V)$ for $1<p<\infty$.

A natural question arises, whether the composition and boundedness properties still hold if the underlying gradation is the natural one of a homogeneous group. Note that for homegeneous groups of step bigger than 2 the commutator condition is no longer  satisfied. We provide an answer in the affirmative under somewhat relaxed assumptions on the kernels and $p=2$.

The results presented here depend heavily on the symbolic calculus of \cite{glowacki} and can be regarded as an example of usefulness of such a calculus. There occurs a striking resemblance between the estimates defining flag kernels and those of the calculus which has been created and developed quite independently.

The problem of the $L^p$-boundedness of flag kernels on arbitrary homogeneous groups will be dealt with in another paper \cite{glowacki1}.

I wish to thank Fulvio Ricci for a fruitful conversation concerning the subject of this paper.

Even though I had no opportunity to discuss the subject matter of this paper with Andrzej Hulanicki, it unavoidably bears signs of his  influence which can be traced back throughout  the whole of my mathematical work.

\section{Background}

Let $\Ge$ be a nilpotent Lie algebra with a fixed Euclidean structure and $\Ges$ its dual.  Let $\{\delta_t\}_{t>0}$, be a family of group dilations on
$\Ge$ and let
$$
\Ge_j=\{x\in\Ge: \delta_tx=t^{d_j}x\},
\hspace{2em}
1\le j\le R,
$$
where $1=d_1< d_2<\dots <d_R$. Then
\begin{equation}\label{grad}
\Ge=\bigoplus_{j=1}^R\Ge_j,
\qquad
\Ges=\bigoplus_{j=1}^R\Ges_j,
\end{equation}
and
$$
[\Ge_i,\Ge_j]\subset \left\{
\begin{array}{ll}
\Ge_k, & $ if $ d_i+d_j=d_k,\cr \{0\}, & $ if $ d_i+d_j\notin{\mathcal{D},}
\end{array}
\right.
$$
where $\mathcal{D}=\{d_j:1\le j\le R\}$. Let 
$$
\xi\to|\xi|=\sum_{j=1}^R\|\xi_j\|^{1/d_j}=\sum_{j=1}^R|\xi_j|
$$ 
be a homogeneous norm on $\Ges$..  We say that $\Ge$ is \textit{homogeneous of step} $R$. 

We shall also regard $\Ge$ as a Lie group with the
Campbell-Hausdorff multiplication
$$
x_1x_2=x_1+x_2+r(x_1,x_2),
$$
where
\begin{equation*}
\begin{split}
r(x_1,x_2)&=\frac{1}{2}[x_1,x_2]+\frac{1}{12}([x_1,[x_1,x_2]]+[x_2,[x_2,x_1]])
\cr
&+{1\over24}\left[x_2,\left[x_1,[x_2,x_1]\right]\right]+\dots
\end{split}
\end{equation*}
is the (finite) sum of terms of order at least $2$ in the Campbell-Hausdorff
series for $\Ge$. 

Let
$$
|\xi|_j=\sum_{k=j}^R|\xi_k|, 
\qquad
1\le j\le R,
$$
and let $|\xi|_{R+1}=0$. 
Let
$$
\q_\xi(\eta)=\sum_{j=1}^R\frac{\|\eta_j\|}{1+|\xi|_{j+1}},
\qquad
\xi,\eta\in\Ges,
$$
be a family of norms (a H\"ormander metric) on $\Ges$.  Let $g_j$ be a family of functions on $\Ges$ satisfying
$$
|\xi|_{j+1}\le g_j(\xi)\le|\xi|,
\qquad
1\le j\le R,
$$
and
$$
\left(\frac{1+g_j(\xi)}{1+g_j(\eta)}\right)^{\pm1}\le C(1+\q_\xi(\xi-\eta))^M
$$
for some $C>0$ and $M>0$. The metric $\q$ is fixed throughout the paper (cf \cite{glowacki}).

The class $S^m(\Ge)$, where $m\in\R$, is defined as the space of all $A\in\Schw'(\Ge)$ whose Fourier transforms are smooth and satisfy
$$
|D^{\alpha}\widehat{A}(\xi)|\le C_{\alpha}(1+|\xi|)^m\Pi_{j=1}^R(1+g_j(\xi))^{-|\alpha_j|},
\qquad
\xi\in\Ges,
$$
where $\alpha=(\alpha_1,\dots,\alpha_R)$ is a multiindex of length equal to the dimension of $\Ges$, and $\alpha_j$ are submultiindices corresponding to the subspaces $\Ges_j$. 
Note that the elements of $S^m(\Ge)$ have no singularity at infinity.

The space $S^m(\Ge)$ is a Fr\'echet space if equipped with the seminorms
$$
\|A\|_{\alpha}=\sup_{\xi\in\Ges}\Pi_{k=1}^R(1+g_k(\xi))^{|\alpha|}|D^{\alpha}\widehat{A}(\xi)|.
$$

The class $S^0(\Ge)$ is known to be an subalgebra of ${\mathcal{B}}(L^2(\Ge))$. More precisely, we have the following two propositions proved in \cite{glowacki}.

\begin{proposition}\label{calculus}
The mapping
$$
S^{m_1}(\Ge)\times S^{m_2}(\Ge)\ni(A,B)\mapsto A\star B\in S^{m_1+m_2}(\Ge)
$$
is  continuous.
\end{proposition}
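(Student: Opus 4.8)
The plan is to pass to the Fourier side and exhibit the group convolution as a twisted product. Writing the Campbell--Hausdorff law as $yz=y+z+r(y,z)$, substituting the inversion formulas for $A$ and $B$, and changing variables $x=yz$, one records the identity
$$
\widehat{A\star B}(\xi)=\intt \widehat A(\eta)\,\widehat B(\zeta)\,K_\xi(\eta,\zeta)\,d\eta\,d\zeta,
$$
with
$$
K_\xi(\eta,\zeta)=\intt e^{\,i\langle\eta-\xi,\,y\rangle+i\langle\zeta-\xi,\,z\rangle-i\langle\xi,\,r(y,z)\rangle}\,dy\,dz .
$$
When $r\equiv 0$ (the abelian situation) the kernel collapses to a double Dirac mass at $(\xi,\xi)$ and $\widehat{A\star B}=\widehat A\,\widehat B$, which already lies in $S^{m_1+m_2}(\Ge)$ by the Leibniz rule, since the factors $(1+g_j)^{-|\beta_j|}(1+g_j)^{-|\alpha_j-\beta_j|}$ combine to $(1+g_j)^{-|\alpha_j|}$. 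The whole difficulty is therefore isolated in the oscillatory kernel $K_\xi$ produced by the genuinely non-commutative remainder $r$.

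The first thing I would establish is a \emph{confinement estimate}: $K_\xi$, together with its $\xi$-derivatives, is rapidly decreasing in the H\"ormander metric $\q_\xi$, that is
$$
|K_\xi(\eta,\zeta)|\le C_N\,(1+\q_\xi(\eta-\xi))^{-N}(1+\q_\xi(\zeta-\xi))^{-N},
\qquad N\ \text{arbitrary},
$$
uniformly in $\xi$, with analogous bounds (carrying the expected weight factors) for $D^\alpha_\xi K_\xi$. The mechanism is integration by parts in $y$ and $z$. A commutator $\Ge_i\times\Ge_j\to\Ge_k$ with $d_k=d_i+d_j$ pairs $\xi$ against a term of homogeneous degree $d_k$; on the $\q_\xi$-dual box one has $\|y_i\|\le C(1+|\xi|_{i+1})^{-1}$ and $\|z_j\|\le C(1+|\xi|_{j+1})^{-1}$, and since $k>i$ and $k>j$ the phase $\langle\xi,r(y,z)\rangle$ may be large. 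Its oscillation is exactly what furnishes the decay.

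Granting the confinement estimate, the symbol bound for $A\star B$ follows by a metric comparison. Differentiating in $\xi$ and integrating by parts in $(\eta,\zeta)$ distributes the derivatives onto $\widehat A$ and $\widehat B$ (the $\xi$-gradient of the phase $\langle\xi,r\rangle$ produces further $r$-insertions that only improve the decay). Inserting the defining bounds $|D^\beta\widehat A(\eta)|\le C_\beta(1+|\eta|)^{m_1}\prod_{j=1}^R(1+g_j(\eta))^{-|\beta_j|}$ and the analogue for $B$, and using the temperateness hypothesis $\bigl((1+g_j(\xi))/(1+g_j(\eta))\bigr)^{\pm1}\le C(1+\q_\xi(\xi-\eta))^M$ to replace $g_j(\eta),g_j(\zeta)$ by $g_j(\xi)$ at the cost of factors $(1+\q_\xi(\cdot))^M$ absorbed by the decay of $K_\xi$, the convergent $(\eta,\zeta)$-integration yields
$$
|D^\alpha\widehat{A\star B}(\xi)|\le C_\alpha\,(1+|\xi|)^{m_1+m_2}\prod_{j=1}^R(1+g_j(\xi))^{-|\alpha_j|},
$$
with each $C_\alpha$ a finite sum of products of seminorms of $A$ and $B$. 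This gives membership in $S^{m_1+m_2}(\Ge)$ and continuity of the bilinear map simultaneously.

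The main obstacle is the confinement estimate for step $R\ge 3$. For $R=2$ one may simply Taylor-expand $e^{-i\langle\xi,r\rangle}$ and estimate term by term: the only commutator sends $\Ge_1\times\Ge_1$ into a layer of degree $2$, and the leading correction is a lower-order symbol. For higher step this is impossible, because $\|\xi_k\|$ costs $d_k=d_i+d_j$ homogeneous degrees while each frequency derivative (equivalently, each dual-box factor) gains only one; already for $\mathcal D=\{1,2,3\}$ the commutator $\Ge_1\times\Ge_2\to\Ge_3$ makes a single correction term grow like $|\xi_3|$. Hence no finite truncation of the exponential is controlled in absolute value, and one is forced to exploit the oscillation of the large phase $\langle\xi,r(y,z)\rangle$. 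It is precisely the matching $d_k=d_i+d_j$ with the layer structure built into $\q_\xi$ that makes the integration by parts close.
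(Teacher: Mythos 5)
The paper itself offers no proof of this proposition: it is imported verbatim from the Melin-calculus paper \cite{glowacki}, so the comparison is really with the argument there. Your starting point --- passing to the Fourier side and writing $\widehat{A\star B}$ as a twisted product governed by the Campbell--Hausdorff remainder $r$ --- is indeed how that calculus is set up. But the lemma on which your whole plan rests, the pointwise ``confinement estimate'' for $K_\xi$, is not merely unproved: it is false. First, $K_\xi$ is not a function. Since $r(y,z)$ contains no bracket involving the top-layer variables $y_R,z_R$ (such a bracket would have degree exceeding $d_R$), the $(y_R,z_R)$-integration produces the factor $\delta(\eta_R-\xi_R)\,\delta(\zeta_R-\xi_R)$, and further delta factors arise in every direction in which the phase is degenerate. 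Second, and fatally, in the nondegenerate directions the kernel has modulus one and no decay at all. Already on the Heisenberg group ($R=2$, $\lambda=\xi_2\neq0$, $\sigma$ the symplectic form on $\Ge_1$) one computes, up to constants,
$$
K_\xi(\eta,\zeta)=c\,|\lambda|^{-\dim\Ge_1}\,\delta(\eta_2-\xi_2)\,\delta(\zeta_2-\xi_2)\,
\exp\Bigl(\tfrac{2i}{\lambda}\,\sigma(\eta_1-\xi_1,\zeta_1-\xi_1)\Bigr),
$$
which is a unimodular oscillation in $(\eta_1,\zeta_1)$: there is no rapid decrease in $\q_\xi(\eta-\xi)$ for any $N$. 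Integration by parts in $(y,z)$ inside the kernel integral alone cannot create decay, because the stationary set of the phase is an affine subspace that sweeps out all of $(\eta,\zeta)$-space as $(y,z)$ varies.

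The consequence is that your logical skeleton --- kernel decays rapidly, hence the triple integral converges absolutely, hence Leibniz plus temperateness of $g_j$ gives the symbol bounds --- collapses at the first step, and ``granting the confinement estimate'' grants something false. The oscillation of $K_\xi$ yields decay only \emph{after} pairing with $\widehat A(\eta)\,\widehat B(\zeta)$, with the integrations by parts landing on the symbols, whose derivatives are exactly what the classes $S^{m}(\Ge)$ control; equivalently, one must first decompose $\widehat A$ and $\widehat B$ by a partition of unity adapted to the metric $\q$ into confined pieces, prove decay estimates for the pairing of two confined pieces, and resum using the temperateness hypotheses --- on groups of general step this is further combined with the layer/central-extension structure of $\Ge$. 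That machinery is the actual content of the proposition in \cite{glowacki} and cannot be bypassed. Your closing diagnosis of why Taylor expansion fails for $R\ge3$ is correct, but the object you propose to replace it with does not have the property your argument needs.
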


\begin{proposition}\label{cv}
If $A\in S^0(\Ge)$, then 
$$
{\rm Op}(A)f(x)=\int_{\Ge}f(xy)A(dy),
\qquad
f\in\Schw(\Ge),
$$ 
extends to a  bounded operator on $L^2(\Ge)$, and the mapping
$$
S^0(\Ge)\ni A\mapsto{\rm Op}(A)\in\mathcal{B}(L^2(\Ge))
$$
is continuous.
\end{proposition}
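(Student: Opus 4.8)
The plan is to deduce the $\Ltwo$-boundedness from the composition theorem (Proposition~\ref{calculus}) by the classical square-root trick of H\"ormander. First I would record that $\op(A)^{*}=\op(A^{*})$, where $A^{*}$ is the involution $A^{*}(y)=\overline{A(y^{-1})}$ on distributions (recall that $\Ge$ is unimodular); on the symbol side this is complex conjugation composed with the reflection $\xi\mapsto-\xi$, and since both $|\xi|$ and the functions $g_{j}$ are essentially even, $A^{*}\in S^{0}(\Ge)$ whenever $A\in S^{0}(\Ge)$, with seminorms preserved. By Proposition~\ref{calculus} the symbol $B=A^{*}\star A$ then lies in $S^{0}(\Ge)$ and $\|B\|_{\alpha}\le C\|A\|_{\beta}\|A\|_{\gamma}$ for suitable seminorms. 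Since $\|\op(A)\|^{2}=\|\op(A)^{*}\op(A)\|=\|\op(B)\|$ and $\op(B)=\op(A)^{*}\op(A)\ge 0$, it suffices to produce a bound $\|\op(B)\|\le C'$ in terms of the seminorms of $B$ for self-adjoint $B$, noting that $\widehat{B}$ is then real. Throughout I would first assume $\widehat{B}\in\Schw(\Ges)$, a class on which $\op(B)$ is manifestly bounded since its convolution kernel is Schwartz, and only track the dependence on seminorms; the general case follows by the usual approximation and continuity argument.

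Now the heart of the matter. Let $M=\sup_{\xi}|\widehat{B}(\xi)|<\infty$ and put $\lambda=2M$. The symbol $p_{0}=(\lambda-\widehat{B})^{1/2}$ is well defined because $\lambda-\widehat{B}\ge M>0$; differentiating by the Fa\`a di Bruno rule and using that $\lambda-\widehat{B}$ is bounded above and below shows $p_{0}\in S^{0}(\Ge)$ with controlled seminorms. Writing $P_{0}$ for the distribution with Fourier transform $p_{0}$, the composition calculus gives $P_{0}^{*}\star P_{0}=\lambda\,\delta-B-\rho_{1}$, where $\delta$ is the convolution identity ($\widehat{\delta}=1$) and the error $\rho_{1}$ is of order strictly lower than $0$, i.e.\ it gains a factor of the functions $g_{j}$. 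I would then correct $P_{0}$ iteratively, constructing $P_{k}\in S^{-\e k}$ for some fixed $\e>0$ so that the asymptotic sum $P\sim\sum_{k\ge0}P_{k}\in S^{0}(\Ge)$ satisfies $P^{*}\star P=\lambda\,\delta-B-\rho$ with $\rho\in\bigcap_{m}S^{m}(\Ge)=S^{-\infty}$. Applying $\op$ yields the operator identity $\lambda I-\op(B)=\op(P)^{*}\op(P)+\op(\rho)$, whence $\op(B)\le\bigl(\lambda+\|\op(\rho)\|\bigr)I$; combined with $\op(B)\ge0$ this gives $\|\op(B)\|\le\lambda+\|\op(\rho)\|$. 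Since $\rho\in S^{-\infty}$ has a Schwartz kernel, $\op(\rho)$ is bounded with norm controlled by finitely many seminorms, and unwinding the constants through the reduction yields both the boundedness and the asserted continuity of $A\mapsto\op(A)$.

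The main obstacle is the iterative construction just described, which relies on more than the bare continuity statement of Proposition~\ref{calculus}: one needs the accompanying asymptotic expansion of $P^{*}\star P$ in which successive terms genuinely gain decay measured by the functions $g_{j}$, together with a Borel-type summation producing an honest element of $S^{0}(\Ge)$ from the formal series $\sum_{k}P_{k}$. Verifying that the gain is uniform and that the remainder $\rho$ is controlled by seminorms of the original $A$ is the quantitative core of the argument; the square-root estimates for $p_{0}$ are routine but must be handled with care because the $g_{j}$ are only controlled through the metric $\q$ rather than being smooth. An alternative route, avoiding the parametrix, is a Cotlar--Stein argument: decompose $\widehat{A}$ by a partition of unity subordinate to $\q$-balls of bounded overlap, bound each frequency-localized piece directly, and extract the almost-orthogonality of the resulting operators from the decay of the composed symbols; there the obstacle merely shifts to proving that the off-diagonal compositions decay fast enough in the metric distance between the supports.
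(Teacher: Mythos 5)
First, a point of reference: this paper contains no proof of Proposition \ref{cv} at all --- it is quoted, together with Proposition \ref{calculus}, from \cite{glowacki}, so the only benchmark is the method of that cited paper. There the $L^2$-theorem is not obtained by a parametrix construction; it rests on almost-orthogonality (Cotlar--Stein) arguments combined with decompositions adapted to the group structure --- that is, on the route you mention only as an afterthought in your closing sentences.

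More importantly, your principal argument has a gap that is structural, not merely technical. The square-root trick needs correction terms $P_k\in S^{-\e k}(\Ge)$ and a remainder $\rho\in\bigcap_m S^m(\Ge)$. But in these classes the \emph{order} $m$ is measured by powers of $(1+|\xi|)$, whereas differentiation --- and hence every error term that any composition expansion could produce --- gains only factors $(1+g_j(\xi))^{-1}$. The two scales are incomparable: the axioms require only $g_j(\xi)\ge|\xi|_{j+1}$, so the $g_j$ stay bounded on large regions of $\Ges$. With the explicitly admissible choice $g_j(\xi)=|\xi|_{j+1}$ one has $g_R\equiv0$, i.e.\ no gain whatsoever in the $\xi_R$-directions; even for the flag choice $g_j(\xi)=|\xi|_j$, all gains except the first-block one are trivial along $\{\xi:\xi_2=\dots=\xi_R=0\}$. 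Consequently no error term lies in $S^{-\e}(\Ge)$ for any $\e>0$: already your $\rho_1$ fails to do so, the iteration never leaves order $0$, and $S^{-\infty}$ is unreachable. This is exactly the known obstruction for symbols of type $(0,0)$: the Calder\'on--Vaillancourt theorem cannot be proved by the square-root trick, and the present calculus has $(0,0)$-type directions built in. Nor would an expansion ``with gains measured by the $g_j$'' rescue the plan: a remainder whose Fourier transform gains arbitrary powers of $(1+g_j(\xi))^{-1}$ may, in the worst admissible case, still be a bounded function with no decay in some variables, and since $\op$ is convolution on a noncommutative group rather than a Fourier multiplier, the boundedness of such a remainder is an instance of the very theorem being proved, not a triviality. (A further small issue: stability of $S^0(\Ge)$ under the involution $A\mapsto A^{*}$ needs $g_j(-\xi)\approx g_j(\xi)$, which holds for the natural choices but is not among the stated axioms.) So the heart of your proof collapses; the Cotlar--Stein alternative you sketch is the viable approach, but there the off-diagonal almost-orthogonality estimates you defer are precisely the substance of the result.
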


\ignore{
Let $A_j\in S^{m_j}(\Ge)$, where $m_j\searrow-\infty$. Then there exists a distribution $A\in S^{m_1}$ such that
$$
A-\sum_{j=1}^NA_j\in S^{m_{N+1}}(\Ge)
$$
for every $N\in\N$. The distribution $A$ is unique modulo the Schwartz class $\Schw(\Ge)$.  We shall write
$$
A\approx\sum_{j=1}^{\infty}A_j
$$ 
and call  $A$ the asymptotic sum of the corresponding series (cf., e.g., H\"ormander \cite{hormander}, Proposition 18.1.3).
}

\section{Main results}

We extend the definition of a flag kernel of Nagel-Ricci-Stein to include all $K\in\Schw'(\Ge)$ whose Fourier transforms are smooth for $\xi_R\neq0$ and satisfy
\begin{equation}\label{multiplier}
|D^{\alpha}\widehat{K}(\xi)|\le C_{\alpha}\Pi_{j=1}^Rg_j(\xi)^{-|\alpha_j|},
\qquad
\xi_R\neq0,
\end{equation}
where the weight functions defined above are now additionally assumed to be homogeneous. Note that for $g_j(\xi)=|\xi|_j$ we get the usual flag kernels. Another interesting choice is $g_j(\xi)=|\xi|_{j+1}$. In the latter case the estimates of the derivatives in the direction of $\xi_R$ are irrelevant. Observe that if 
$$
<K_t,f>=\int_{\Ge}f(tx)K(x)\,dx,
$$
then the flag kernels $K_t$ satisfy the estimates (\ref{multiplier}) uniformly in $t>0$.

 We shall  need two cut-off functions. Let $\phi\in C^{\infty}(\Ges_R)$ be equal to $1$ for $1\le|\xi_R|\le2$ and vanish for $|\xi_R|\ge4$ and $|\xi_R|\le1/2$.  Let $\psi\in C^{\infty}(\Ges_R)$ be equal to $1$ for $1/2\le|\xi_R|\le4$ and vanish for $|\xi_R|\ge8$ and $|\xi_R|\le1/4$. Thus, in particular, $\phi\cdot\psi=\phi$.
\begin{theorem}
A composition of  flag kernels is also a flag kernel.
\end{theorem}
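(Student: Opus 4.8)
The plan is to deduce the statement from the symbolic calculus of Proposition~\ref{calculus} by localising away from the only singularity of a flag kernel, which sits on $\{\xi_R=0\}$. The structural fact that makes this possible is that $\Ge_R$ is central: since $d_R$ is maximal, $d_i+d_R\notin\mathcal{D}$ and hence $[\Ge_i,\Ge_R]=\{0\}$ for every $i$. Because every term of the remainder $r(x_1,x_2)$ is an iterated bracket, it follows that $r$ does not depend on the $\Ge_R$-components of its arguments. Writing the composition $K_1\star K_2$ as group convolution and taking the partial Fourier transform in the central variable $x_R$, the multiplication law in $x_R$ reduces to addition, so the composition becomes, fibrewise in the dual variable $\xi_R$, a twisted convolution on the quotient group $\Ge/\Ge_R$ with a cocycle $e^{-i\langle\xi_R,\,\cdot\,\rangle}$ built from the $\Ge_R$-part of $r$. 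In particular the top frequency $\xi_R$ is exactly conserved, and multiplication by any function of $\xi_R$ alone commutes with the composition.

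Next I would build a Littlewood--Paley partition in $\xi_R$. Using the group dilations, set $\phi_k(\xi_R)=\phi(\delta_{2^{-k}}\xi_R)$ and $\psi_k(\xi_R)=\psi(\delta_{2^{-k}}\xi_R)$, so that the $\phi_k$ provide a partition of unity on $\{\xi_R\neq0\}$ subordinate to the shells $|\xi_R|\sim 2^k$ (normalising $\phi$ if necessary), while $\phi\psi=\phi$ gives $\psi_k\equiv1$ on $\operatorname{supp}\phi_k$. Put $B_{i,k}=(\psi_k\widehat{K_i})^{\vee}$. Since $\psi_k\equiv1$ on $\operatorname{supp}\phi_k$ and the composition conserves $\xi_R$, replacing $K_i$ by $B_{i,k}$ changes nothing on the $k$-th shell, whence the clean identity
\begin{equation*}
\phi_k\,\widehat{K_1\star K_2}=\phi_k\,\widehat{B_{1,k}\star B_{2,k}}.
\end{equation*}
Thus the scales decouple completely: there are no off-diagonal interactions to estimate, precisely because the central top frequency cannot leak under composition.

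It remains to estimate each piece uniformly in $k$. Rescaling covariantly by $\delta_{2^{k}}$ carries the $k$-th shell to $|\xi_R|\sim1$ and, the dilations being automorphisms, commutes with the composition; by the dilation invariance of the defining estimates (the uniform bounds for the $K_t$ recorded after (\ref{multiplier})) the rescaled factors $\psi\cdot\bigl(\widehat{K_i}\circ\delta_{2^k}\bigr)$ satisfy the flag estimates uniformly in $k$. On $|\xi_R|\sim1$ one has $g_j(\xi)\ge|\xi|_{j+1}\ge|\xi_R|\gtrsim1$ for $j<R$, so there $1+g_j\sim g_j$ and the flag estimate (\ref{multiplier}) coincides with the defining estimate of $S^0(\Ge)$; hence the rescaled factors lie in $S^0(\Ge)$ with seminorms bounded independently of $k$. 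Proposition~\ref{calculus} then places their composition in $S^0(\Ge)$, uniformly in $k$, and the inequality $(1+g_j)^{-1}\le g_j^{-1}$ turns the resulting $S^0$ estimates back into the flag estimates (\ref{multiplier}) on the shell $|\xi_R|\sim1$, with $k$-independent constants. Undoing the rescaling and summing $\widehat{K_1\star K_2}=\sum_k\phi_k\,\widehat{B_{1,k}\star B_{2,k}}$, in which at most boundedly many shells overlap at any point, yields both the smoothness of $\widehat{K_1\star K_2}$ for $\xi_R\neq0$ and the flag estimates; the singularity at $\xi_R=0$ is restored by the tail $k\to-\infty$.

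The crux is twofold. The conceptual heart is the exact decoupling of scales, i.e. turning the centrality of $\Ge_R$ into the commutation of the frequency cut-off with the non-commutative composition; although heuristically an integration in the central variable, this must be justified for tempered distributions whose Fourier transforms are merely symbols, which is cleanest through the twisted-convolution picture on $\Ge/\Ge_R$ indicated above. The genuinely delicate analytic point, and the one where I expect the real work, is the uniform-in-$k$ control near the singularity: differentiation in $\xi_R$ falls on $\xi_R$ both as the frequency at which the factors are sampled and as the parameter of the $\Ge_R$-valued twist carried by $r$, and the extra terms produced must be reabsorbed into the weights $g_j$ with $j<R$. This is exactly the balance encoded by the H\"ormander metric $\q_\xi$ and quantified by the continuity in Proposition~\ref{calculus}, which is why the dilation covariance of the flag class is indispensable: it is what renders the shell-by-shell symbol bounds uniform and hence summable.
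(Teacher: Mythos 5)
Your proposal is correct and follows essentially the same route as the paper: cut off near a unit shell in $\xi_R$, use the centrality of $\Ge_R$ (so that multiplication by functions of $\xi_R$ alone passes through the convolution) to place the truncated kernels in $S^0(\Ge)$, apply Proposition~\ref{calculus} there, and recover all scales from the dilation invariance of the estimates (\ref{multiplier}). The differences are only organizational: you spell out the conservation-of-$\xi_R$ argument that the paper leaves implicit, and you package the scales as a Littlewood--Paley sum, whereas the paper simply proves the estimate on the single shell $1\le|\xi_R|\le2$ and transports it to every shell $1/t\le|\xi_R|\le2/t$ by rescaling the kernels.
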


\begin{proof}
 Let $K=K_1\star K_2$, where $K_j$ are flag kernels. Then
$$
\widehat{K}(\xi)=\widehat{A_1\star A}_2(\xi),
\qquad
1\le|\xi_R|\le2,
$$
where
$$
\widehat{A}_j(\xi)=\widehat{K}_j(\xi)\phi(\xi_R),
$$
and $A_j\in S^0(\Ge)$. Therefore, by Proposition \ref{calculus},
\begin{equation}\label{estimate}
|D^{\alpha}\widehat{K}(\xi)|\le C_{\alpha}\Pi_{j=1}^Rg_j(\xi)^{-|\alpha_j|},
\qquad
1\le|\xi_R|\le2.
\end{equation}
Since
\begin{equation}\label{homo}
\widehat{(K_j)_t}(\xi)=\widehat{K}_j(t\xi),
\qquad
t>0,
\end{equation}
satisfy uniformly (\ref{multiplier}), we get the estimate (\ref{estimate}) for $1/t\le|\xi_R|\le2/t$, $t>0$, that is, for all $\xi_R\neq0$.
\end{proof}

\begin{theorem}
Let $K$ be a flag kernel. The operator $f\to f\star\widetilde{K}$ defined initially on $\Schw(\Ge)$ extends uniquely to a bounded operator on $L^2(\Ge)$.
\end{theorem}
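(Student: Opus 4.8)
The plan is to prove boundedness by a Littlewood--Paley decomposition in the last variable $\xi_R$, reducing everything to the uniformly bounded building blocks furnished by Proposition \ref{cv}, and then to reassemble the pieces by almost-orthogonality. First I would build a dyadic partition of unity adapted to the top layer: starting from $\phi$, set $\phi_j(\xi_R)=\phi(\delta_{2^{-j}}\xi_R)$, normalised so that $\sum_{j\in\Z}\phi_j\equiv 1$ for $\xi_R\neq 0$, and define the pieces by $\widehat{A_j}(\xi)=\widehat{K}(\xi)\,\phi_j(\xi_R)$. Each $\phi_j$ localises to $|\xi_R|\sim 2^j$, so on its support both the flag singularity along $\xi_R=0$ and the absence of decay in $\xi_R$ disappear; exactly as in the proof of the composition theorem, the fixed-scale symbol $\widehat{K_{2^j}}(\xi)\phi(\xi_R)$ lies in $S^0(\Ge)$, and the uniformity of (\ref{multiplier}) for the dilated family $K_{2^j}$ (cf. (\ref{homo})) makes its $S^0$-seminorms bounded independently of $j$.

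Next I would observe that $A_j$ is a group dilate of this fixed-scale symbol. Since the $L^2$-normalised dilations are unitary and conjugate one convolution operator into another, Proposition \ref{cv} gives $T_j:=\op(A_j)$ with $\|T_j\|_{\Ltwo\to\Ltwo}\le C$ uniformly in $j$. A naive summation of the $T_j$ fails because there are infinitely many of them, so the whole argument rests on controlling the cross terms $T_j^*T_k$ and $T_jT_k^*$.

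The decisive step, and the place where the structure of the group is used, is that here the almost-orthogonality is in fact \emph{exact}. Because $d_R$ is the top weight, $[\Ge_R,\Ge_j]\subset\Ge_k$ would force $d_k=d_R+d_j>d_R$, so $[\Ge_R,\Ge_j]=\{0\}$ for all $j$ and the top layer $\Ge_R$ is central. Hence the $\Ge_R$-component of the Campbell--Hausdorff product is $(x_1x_2)_R=(x_1)_R+(x_2)_R+r_R$ with $r_R$ depending only on the lower layers; taking the Fourier transform in the $\Ge_R$-variable turns this additive structure into the statement that the top frequency $\xi_R$ is conserved by convolution, i.e. $\widehat{A\star B}(\xi)$ is determined by $\widehat{A}$ and $\widehat{B}$ at the \emph{same} $\xi_R$. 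Therefore disjointness of the $\xi_R$-supports of $\widehat{A}$ and $\widehat{B}$ forces $A\star B=0$. Since $\op(A_j)^{*}$ is $\op$ of a symbol again supported in the annulus $|\xi_R|\sim 2^j$ (inversion $x\mapsto x^{-1}$ reverses $x_R$ but preserves $|\xi_R|$), both products $T_j^*T_k$ and $T_jT_k^*$ are $\op$ of a convolution of two symbols localised in annuli $\sim 2^j$ and $\sim 2^k$, and hence vanish once $|j-k|$ exceeds a fixed constant.

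I would then conclude by the Cotlar--Stein lemma: with $\|T_j^*T_k\|=\|T_jT_k^*\|=0$ for $|j-k|$ large and the uniform bound $\|T_j\|\le C$, the partial sums $\sum_{|j|\le N}T_j$ are bounded on $L^2(\Ge)$ uniformly in $N$, so $\sum_j T_j$ extends to a bounded operator; as $\sum_j\widehat{A_j}=\widehat{K}$ off $\xi_R=0$, it coincides with $f\mapsto f\star\widetilde{K}$ on $\Schw(\Ge)$. The main obstacle is precisely the conservation-of-$\xi_R$ step: one must verify carefully that the lower-order Campbell--Hausdorff corrections enter only through the lower layers and, after the partial Fourier transform in $\Ge_R$, act merely as a phase, so that they do not spread the $\xi_R$-support and the orthogonality is genuinely exact rather than only approximate.
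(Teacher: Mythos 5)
Your proposal is correct, and its analytic core coincides with the paper's: a dyadic decomposition in the $\xi_R$-variable, the observation that after dilating by $\delta_{2^j}$ the localized pieces $\widehat{K_{2^j}}(\xi)\phi(\xi_R)$ form a bounded family in $S^0(\Ge)$ (by the uniformity of (\ref{multiplier}) for the dilates (\ref{homo})), and Proposition \ref{cv} for the uniform operator bounds. The difference lies in what gets decomposed and how the pieces are summed. The paper decomposes the \emph{input}: it sets $\widehat{f_n}(\xi)=\widehat{f}(\xi)\phi(2^{-n}\xi_R)$, pairs each $f_n$ with the slightly wider kernel cutoff $K_n$ (so that $\phi\cdot\psi=\phi$ gives $\op(K)f_n=\op(K_n)f_n$), bounds $\|\op(K_n)f_n\|_2\le C\|f_n\|_2$ by rescaling, and then sums norms via the claimed equivalence $m\|f\|_2\le\sum_n\|f_n\|_2\le M\|f\|_2$; you decompose the \emph{kernel} and reassemble by Cotlar--Stein with exactly vanishing cross terms. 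Two remarks on the comparison. First, your ``decisive step'' --- centrality of $\Ge_R$, hence conservation of $\xi_R$ under group convolution --- is not a detour: it is precisely the fact the paper uses tacitly when it identifies the $n$-th frequency block of $\op(K)f$ with $\op(K_n)f_n$, since without the fiberwise (in $\xi_R$) structure of convolution nothing would prevent $\op(K)$ from moving the $\xi_R$-support of $f_n$ outside the region where $\widehat{K}$ and $\widehat{K_n}$ agree; making this explicit is a gain in rigor, not extra work. Second, your reassembly is the more robust one: as printed, the paper's upper bound $\sum_n\|f_n\|_2\le M\|f\|_2$ with an $\ell^1$-sum cannot hold with $M$ independent of $f$ (take $\widehat{f}$ spread evenly over $N$ dyadic annuli: the left side is of order $N$, the right side of order $\sqrt{N}$); it must be read as the square-function statement $\sum_n\|f_n\|_2^2\simeq\|f\|_2^2$, after which the outputs $\op(K_n)f_n$ must themselves be summed using the bounded overlap of their $\xi_R$-supports --- exactly the orthogonality you prove. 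So the two arguments rest on the same estimate, but yours isolates the orthogonality as the key lemma and thereby also repairs the summation step. The only gaps you share with the paper are minor: the assertion that the cutoff symbols belong to $S^0(\Ge)$ uniformly (quoted, not proved, by both), and the identification $\sum_j T_jf=\op(K)f$ on $\Schw(\Ge)$, which requires $\widehat{K}$ to be a locally integrable function with no singular part on $\{\xi_R=0\}$ together with an elementary limiting argument for the partial sums.
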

\begin{proof}
For $f\in\Schw(\Ge)$ let 
$$
\widehat{f_n}(\xi)=\widehat{f}(\xi)\phi(2^{-n}\xi_R),
\qquad
n\in\Z.
$$
Then
$$
m\|f\|_2\le \sum_{n=-\infty}^{\infty}\|f_n\|_2\le M\|f\|_2,
\qquad
f\in\Schw(\Ge),
$$
for some $m,M>0$. Let $K_n$ be defined by 
$$
\widehat{K_n}(\xi)=\widehat{K}(\xi)\psi(2^{-n}\xi_R).
$$ 
Then the flag kernels $L_n=(K_n)_{2^n}$ are uniformly in $S^0(\Ge)$, and
\begin{align*}
\|\op(K)&f\|_2\le\frac{1}{m}\sum_{n\in\Z}\|\op(K_n)f_n\|_2\\
&=\frac{1}{m}\sum_{n\in\Z}2^{nQ/2}\|\op(L_n)(f_n)_{2^n}\|_2
\le\frac{C}{m}\sum_{n\in\Z}2^{nQ/2}\|(f_n)_{2^n}\|_2\\
&\le\frac{C}{m}\sum_{n\in\Z}\|f_n\|_2
\le\frac{CM}{m}\|f\|_2
\end{align*}
for a $C>0$, which completes the proof.
\end{proof}


\begin{thebibliography}{99}
\ignore{
\bibitem{christ}
M. Christ, \emph{Inversion in some algebras of singular integral
operators}, Rev. Mat. Iberoamericana 4 (1988), 219--225,

\bibitem{christ-geller}
{M. Christ \and D. Geller}, {Singular integral characterization of Hardy spaces on homogeneous groups}, {\em Duke Math. J. }51 (1984), 547-598,
}
\ignore{
\bibitem{glowac}
{P. G{\l}owacki}, {Invertibility of convolution operators on homogeneous groups}, {\it to appear},
}

\bibitem{glowacki}
{P. G{\l}owacki}, {The Melin calculus for general homogeneous groups}, {\em Arkiv mat.}, 45 (2007), 31-48;

\bibitem{glowacki1}
{P. G{\l}owacki}, {$L^{p}$-boundedness of singular integral operators with flag kernels via symbolic calculus}, {\it to appear};


\ignore{
\bibitem{hormander}
L. H\"ormander, {The analysis of linear partial differential operators}
{\em vol. III, Berlin - Heidelberg - New York - Tokyo} 1983,
}

\bibitem{nagel}
{A. Nagel \and F. Ricci \and E.M. Stein}, {Singular integrals with flag kernels and analysis on quadratic CR manifolds}, {\em J. Func. Analysis} 181, 29-118 (2001).

\end{thebibliography}
 \end{document}